\def\part#1{\mathop{\mathrm{Part \left( #1 \right)}}}
\def\partLE#1{\mathop{\mathrm{Part_{\le} \left( #1 \right)}}}
\renewcommand\[{\begin{equation}}
\renewcommand\]{\end{equation}}
\theoremstyle{plain}
\theoremstyle{plain}
  \theoremstyle{definition}
  \newtheorem{defn}{Definition}
 \theoremstyle{definition}
  \newtheorem{example}{Example}
  \theoremstyle{plain}
  \newtheorem{fact}{Fact}
  \theoremstyle{plain}
  \newtheorem{prop}{Proposition}
  \theoremstyle{plain}
  \newtheorem{cor}{Corollary}
\journal{Journal of Statistical Planning and Inference}
\begin{document}

\begin{frontmatter}




\title{A Test Statistic for Weighted Runs}


\author{Frederik Beaujean\corref{cor1}}
\cortext[cor1]{Corresponding author}
\ead{beaujean@mpp.mpg.de}

\author{Allen Caldwell}
\ead{caldwell@mpp.mpg.de}

\address{
Max Planck Institute for Physics
}

\begin{abstract}
A new test statistic based on  runs of weighted deviations
is introduced. Its use for observations sampled from independent normal
distributions is worked out in detail. It supplements the classic
$\chi^{2}$ test which ignores the ordering of observations
and provides additional sensitivity to local deviations
from expectations.  The
exact distribution of the statistic in the non-parametric case is
derived and an algorithm to compute $p$-values is presented. The
computational complexity of the algorithm is derived employing a novel
identity for integer partitions.

\end{abstract}

\begin{keyword}
Success runs
\sep $p$-value
\sep $\chi^2$
\sep Integer partitions
\sep Measurements with Gaussian uncertainty

\MSC 62G10 
\sep 05A17 
\sep 60C05 
\sep 62P35 

\end{keyword}

\end{frontmatter}



\section{Introduction}

In the course of scientific inference, we are faced with one basic
task: comparing observations and model predictions. Based on this
comparison, the hypothesized model may be either accepted or rejected.
In the latter case usually an improved model is sought. The comparison
between observations and the new model is then repeated until a satisfactory
model has been constructed.

In model validation the goal is to provide quantitative test procedures.
The standard approach consists of defining a scalar function of the
data $D$, called \emph{test statistic }$T\left(D\right)$, such that
a large value of $T$ indicates a large deviation of the data from
the expectations under the hypothesized model $\mathcal{H}$. Correspondingly,
small $T$ is seen as good agreement. Let $T_{obs}$ denote the value
of $T$ observed in the actual data set. In order to facilitate the
interpretation of $T$ (how large is too large?), it is useful to
introduce the $p$\emph{-value}. Assuming  $\mathcal{H}$,
the $p$-value is defined as the tail area probability to randomly
sample a value of $T$ larger than or equal to $T_{obs}$: \[
p\equiv P\left(T\ge T_{obs}\left|\mathcal{H}\right.\right).\]
If  $\mathcal{H}$ is correct and all parameters are fixed,
then $p$ is a random variable with uniform distribution on $\left[0,1\right]$.
An incorrect model will typically yield smaller values of $p$. This
is used to guide model selection. For the same data, different models
will give different $p$. Similarly, a different choice of the test
statistic produces a different $p$ for the same model and data. Why
use different statistics? Because one statistic is sensitive to certain,
but not to all properties of the model.

To illustrate this, recall that in the majority of practical applications
the hypothesis $\mathcal{H}$ describing the set of $N$ observations $D=\left\{ X_{i}\right\} $
is constructed with individual observations $X_{i}\in\mathbb{R}$
considered independent. The discrete scalar index $i$ provides an
ordering for the data. It may represent  time, length, energy $\dots$.
For concreteness, let us assume independent, normally distributed
variables $X_{i}\sim\mathcal{N}\left(\mu_{i},\sigma_{i}^{2}\right)$.
We can write the probability density of the data as

\begin{equation}
P\left(D\left|\mathcal{H}\right.\right)=\prod_{i=1}^{N}P\left(X_{i}\left|\mu_{i},\sigma_{i}^{2}\right.\right)\propto\prod_{i=1}^{N}\exp\left(-\frac{\left(X_{i}-\mu_{i}\right)^{2}}{2\sigma_{i}^{2}}\right)=\exp\left(-\frac{\chi_{N}^{2}}{2}\right),\label{eq:Gaussian likelihood}\end{equation}
 where $\chi_{N}^{2}=\sum_{i=1}^{N}\frac{\left(X_{i}-\mu_{i}\right)^{2}}{\sigma_{i}^{2}}$
appears naturally; it is the most widely used test statistic to probe
$\mathcal{H}$; a large $\chi_{N}^{2}$ translates directly into a
small $P\left(D\left|\mathcal{H}\right.\right)$. Note that $\chi_{N}^{2}/N$
is a measure of the average deviation per observation, but it is blind
to the ordering of the data points.

In this paper, we introduce a test statistic sensitive to local deviations
of the data from expectations within an ordered data set. The test
statistic we propose is valid for data which are expected to have
equal probabilities to be below or above expectations. For concreteness,
we consider the $X_{i}$ normally distributed with known mean and
variance, but the formulation is valid for any symmetric distribution.

Statistics involving \emph{runs}; i.e. sequences of observations that
share a common attribute commonly called a \emph{success}, have drawn
a lot of attention. Good reviews are presented in \cite{mood_distribution_1940,balakrishnan_runs_2002,koutras_book_2002,fu_distribution_2003}.
Most of the early work was centered around independent Bernoulli trials;
  cf. \cite{burr_longest_1961,philippou_successes_1986} and \cite{muselli_simple_1996}.
After the introduction of the \emph{Markov chain imbedding} approach
by \cite{fu_distribution_1994}, runs statistics have been considered
also for more complicated models with Markov dependence  \cite{lou_runs_1996,vaggelatou_length_2003,fu_exact_2003,eryilmaz_results_2006}.
For the case of exchangeable binary trials see \cite{eryilmaz_success_2007}. 
Ref. \cite{makri_success_2007} provides a summary of useful formulae and distributions using
a combinatorial approach.

In this paper we call an observation a \emph{success, }S, if the observed
value exceeds the expected value. Similarly an expected value exceeding
the observation is considered a \emph{failure, }F\emph{. }Obviously
the meaning of success and failure may be reversed, and without loss
of generality we may concentrate on the success runs.
 Using the notation
of \cite{fu_distribution_2003} and counting convention of \cite{mood_distribution_1940},
the simplest test statistics based on runs are the \emph{number of
runs of length exactly }$k$, $E_{N,k}$, and the \emph{length of
the longest run}, $L_{N}$. As an example consider the realization
FSSFS; then $E_{5,1}=1$ and $L_{5}=2$. Observe that both $E_{N,k}$
and $L_{N}$ ignore relevant information: a success is a success no
matter how much $X_{i}$ is bigger than its expected value.

The goal of this paper is to enhance the existing procedures based
on $E_{N,k}$ or $L_{N}$ by introducing a new runs statistic $T$,
similar in spirit to $L_{N}$, which includes that extra information.
For simplicity, we construct the statistic only for \emph{success} runs;
the same steps can be taken to define an analogous statistic
 for \emph{failure} runs as well.
$T$ is formally defined in three steps:
\begin{enumerate}
\item Split the data $\left\{ X_{i}\right\} $ into runs. Keep the success runs
and ignore the failure runs. Denote by $A_{j}=\left\{ X_{j_{1}},X_{j_{2}}\dots\right\} $
the set of observations in the $j$-th success run.
\item Associate a weight with each success run. The weight $w\left(A_{j}\right)$
ought to be chosen such that a large weight indicates large discrepancy
between model and observations. A natural choice of the weight function
is a convenient one-to-one function of the probability (density) of
$A_{j}$ such as $w\left(A_{j}\right)=\left[P\left(A_{j}\left|\mathcal{H}\right.\right)\right]^{-1}$
or $w\left(A_{j}\right)=-2 \log \left(P\left(A_{j}\left|\mathcal{H}\right.\right)\right)$.
\item Choose $T$ as the largest weight: \[
T\equiv \max_{j}w\left(A_{j}\right).\]

\end{enumerate}

We proceed as follows. In sec. \ref{sec:Runs-statistic} we first
derive the general expression for $p=P\left(T\ge T_{obs}\left|\mathcal{H}\right.\right)$
given a model with independent observations and equal probability of success
and failure. The formulation is true for arbitrary weights.
Next we give explicit results
in one concrete example of great importance where $X_{i}\sim\mathcal{N}\left(\mu_{i},\sigma_{i}^{2}\right)$
with $\mu_{i},\sigma_{i}^{2}$ known and  $w\left(A_{j}\right)$ chosen as
 the sum of $\chi^{2}$'s of the samples in $A_{j}$.
 For a large number of observations, $N\gtrsim80$, the evaluation
of the exact expressions for $p$ turns out to be highly demanding
both in terms of computer time and memory, as it scales with the number
of integer partitions. Thus we present a Monte Carlo method that works
even for $N\gtrsim1000$ and compare exact and approximate results.
A selection of critical values of $T$ for common confidence levels
is tabulated.
The power of $T$ is studied in sec. \ref{sec:Example}. Compared to
$\chi^2$, tests based on $T$ are superior in detecting departures
from $\mathcal{H}$. This is demonstrated with a specific but
commonly arising example - the presence of an
unexpected localized peak.
As final remarks, we discuss generalizations of $T$
to non-symmetric uncertainties and composite hypotheses (parameters
fit) in sec. \ref{sec:Conclusion}. In the appendix we introduce integer
partitions in more detail and derive the recurrence relation for integer
partitions needed to analyze the computational complexity required
for computing $p$-values for $T$.

\section{Runs statistic\label{sec:Runs-statistic}}
Let us now make the definition of $T$ explicit in the following
example.
The hypothesis $\mathcal{H}$ for the data $\left\{ X_{i}\right\} ,i=1\dots N$
is formulated as:
\begin{enumerate}
\item All observations $\left\{ X_{i}\right\} $ are independent.
\item Each observation is normally distributed, $X_{i}\sim\mathcal{N}\left(\mu_{i},\sigma_{i}^{2}\right)$.
\item Mean $\mu_{i}$ and variance $\sigma_{i}^{2}$ are known.
\end{enumerate}
 We assume that at least one success, $X_{i}>\mu_{i}$ for some $i\in\left\{ 1,2,\dots N\right\} $,
has been observed. The set of observations $D=\left\{ X_{i}\right\} $
is partitioned into subsets containing the success and failure
runs, keeping only the former and ignoring the latter.
 Let $A_{j}$ denote the subset of the observations of the $j^{th}$
success run, $A_{j}=\left\{ X_{j_{1}},X_{j_{2}}\dots\right\} $.
The weight of the $j^{th}$ success run is then taken to be \begin{equation}
w\left(A_{j}\right)\equiv\chi_{run,\, j}^{2}=\sum_{i}\frac{\left(X_{i}-\mu_{i}\right)^{2}}{\sigma_{i}^{2}},\label{eq:chi2 run}\end{equation}
 where the sum over $i$ is understood to cover all $X_{i}\in A_{j}$.
The test statistic  is the largest weight of any success run \[
T\equiv\max_{j}\chi_{run,\, j}^{2}.\]

\noindent
Our goal is to calculate the $p$-value $p\equiv P(T\ge T_{obs}|N)=1-P(T<T_{obs}|N)$.
Due to the symmetry of the normal distribution, for each observation
the chance of success is
\begin{equation}
P\left(X_{i}\textrm{ is a success}\left|\mathcal{H}\right.\right)=
P\left(X_{i}>\mu_{i}\left|\mathcal{H}\right.\right)=\frac{1}{2}.
\label{eq: symm}
\end{equation}

The following analysis up to \eqref{eq:  intermed. result}
is valid for any  $\mathcal{H}$  such that \eqref{eq: symm} holds.
This symmetric Bernoulli property drastically simplifies the calculation.

The \emph{key idea} is that the set of all sequences of successes
and failures in $N$ Bernoulli trials can be decomposed into equivalence
classes, and $P(T<T_{obs}|N)$ can be expressed as an \emph{expectation
value over inequivalent sequences}.

For our purposes a sequence $\xi$ of length $N$ is sufficiently
characterized by the numbers $n_{1,}\dots n_{N}$ denoting the number
of success runs of length one, $n_{1}$, of length two, $n_{2}\dots$
; we write $\mathbf{n}\left(\xi\right)=\left(n_{1},\dots,n_{N}\right)$.
Two sequences $\xi_{1},\xi_{2}$ of length $N$ are declared equivalent,
if they have the same success runs; i.e.
\begin{equation}
\xi_{1}\sim\xi_{2}\Leftrightarrow\mathbf{n}\left(\xi_{1}\right)=
\left(n_{1},\dots,n_{N}\right)=
\mathbf{n}\left(\xi_{2}\right).
\label{def:equival relation}
\end{equation}

 If the last $n_{N-k},\dots,n_{N}$ are zero they may be omitted.
Reflexivity, symmetry and transitivity of $\sim$ follow immediately.
To illustrate definition \eqref{def:equival relation}, consider the
following example.

Let S {[}F{]} denote a success {[}failure{]}, and consider the sequences
$\xi_{1}=\mbox{SSSFFSFS}$ and  $\xi_{2}=\mbox{FSFSSSFS}$. Both
sequences exhibit two success runs of length one, $n_{1}=2,$ and
one success run of length three, $n_{3}=1$.   Hence $\mathbf{n}\left(\xi_{1}\right)=\left(2,0,1\right)=\mathbf{n}\left(\xi_{2}\right)$,
and the sequences are equivalent, $\xi_{1}\sim\xi_{2}$.

In order to find all inequivalent sequences that need to be accounted
for it turns out to be most useful to fix the number of successes,
$r$, and the number of success runs, $M$, with joint density $P(M,r|N)$.
Thus by the law of total probability

\begin{equation}
P(T<T_{obs}|N)=\sum_{r=1}^{N}\sum_{M=1}^{M_{max}}P(T<T_{obs}|M,r,N)\cdot P(M,r|N).\label{eq: sum over r,M}\end{equation}
The maximum number of success runs, $M_{max}$, for fixed $r$ is determined
as follows: there can be no more success runs than successes, so $M\le r$.
On the other hand, the success runs have to be separated by at least one failure,
hence $M\le N-r+1$. For a fixed number of observations, $N$, we have
$M\le\left\lfloor \frac{N+1}{2}\right\rfloor $. It is easily verified
that the latter condition is implied by the first two, and the constraints
are summarized as \[
M_{max}=\min\left(r,N-r+1\right).\]
The joint distribution $P(M,r|N)$  is conveniently expressed as
\[
P(M,r|N)=\frac{1}{2^{N}-1}\cdot R(M,r|N)\]
where $R\left(M,r|N\right)$ denotes the number of (possibly equivalent)
sequences with $M$ success runs and $r$ successes in $N$ Bernoulli
trials. As an example consider $R\left(1,2 | 3\right)=\left|\left\{ \mbox{SSF},\mbox{FSS}\right\} \right|=2$.
In fact $R\left(M,r|N\right)$ can be calculated efficiently by a
recursive algorithm, but it will be seen to cancel out so that we
have no need to compute it.

With $M,\, r,\, N$ fixed, we can decompose $P(T<T_{obs}|M,r,N)$
into the desired average over inequivalent sequences

\begin{equation}
P(T<T_{obs}|M,r,N)=\sum_{\pi}P(T<T_{obs}|\pi)P(\pi|M,r,N).\label{eq: sum over partitions}\end{equation}
The key observation is that the set of inequivalent sequences $\left\{ \pi\right\} \subset\left\{ \xi\right\} $
is in one-to-one correspondence with the set of \emph{integer partitions}
of $r$ into exactly $M$ summands.

Due to their widespread applicability, the integer partitions have
been studied extensively: \cite{andrews_theory_1998} devoted an entire
book to the partitions. For an online overview we refer to \cite{sloane_-line}.
Efficient algorithms to construct all partitions $\left\{ \pi\right\} $
explicitly are well known; e.g. \cite{knuth_art_2005,adnan_distribution_2007}.
These algorithms scale linearly with the number of partitions. We
refer to the appendix for more details on integer partitions; there
we derive the exact number of sequences needed in calculating $P(T<T_{obs}|N)$.
It grows asymptotically as $\mathcal{O}\left(\frac{1}{N}e^{\sqrt{N}}\right)$.

The probability of one such sequence $\pi$, $P(\pi|M,r,N)$ is just
its multiplicity, $W\left(\pi\right)$, divided by the total number
of elements in $\left\{ \xi\right\} $, which is $R\left(M,r|N\right)$.
The multiplicity is found by basic urn model considerations as the
product of the number of ways to shuffle the success runs and the
number of ways to distribute the failures in between and around the
success runs. While the former is just the multinomial coefficient
\[
\binom{M}{n_{1},\dots,n_{N}},
\]
the latter is obtained as a binomial coefficient. Given $M$ success runs
and $N-r$ failures, $M-1$ failures are needed to separate the success runs,
and the remaining $N-r-M+1$ failures can be allocated freely into
the $M+1$ slots surrounding the success runs. Using Eq. 1 from \cite{makri_shortest_2007}
we obtain\begin{flalign*}
W\left(\pi\right) & =\binom{M}{n_{1},\dots,n_{N}}\cdot\binom{N-r+1}{M}=
\frac{(N-r+1)!}{(N-r+1-M)!\cdot\prod_{l}n_{l}!}\\
 & =\frac{\left(N-r+2-M\right)_{M}}{\prod_{l}n_{l}!}\end{flalign*}
with the Pochhammer symbol defined for positive integers $x,\, n$
as \[
\left(x\right)_{n}\equiv\Gamma\left(x+n\right)/\Gamma\left(x\right)=\left(x+n-1\right)!/\left(x-1\right)!\]

Using the independence of the observations, the probability to observe
a value of $T$ smaller than a fixed $T_{obs}$ in an entire sequence
is just the product of probabilities of finding a weight
$w_l<T_{obs}$ in each
individual success run of length $l$, hence we find at once \[
P(T<T_{obs}|\pi)=\prod_{l}\left[P\left(w_l<T_{obs}|l\right)\right]^{n_{l}}.\]
As an example, consider again the sequence $\mbox{SSSFFSFS}$, with
success runs distribution $\mathbf{n}=\left(2,0,1\right)$, then its contribution
reads\[
P(T<T_{obs}|\pi)=P\left(w_l<T_{obs}|l=1\right)^{2}P\left(w_l<T_{obs}|l=3\right).\]
As an intermediate result we note

\begin{flalign}
P\left(T<T_{obs}\left|N\right.\right) & =\sum_{r=1}^{N}\sum_{M=1}^{M_{max}}\sum_{\pi}P(T<T_{obs}|\pi)\cdot P(\pi|M,r,N)\cdot P(M,r|N)\nonumber \\
 & =\sum_{r=1}^{N}\sum_{M=1}^{M_{max}}\sum_{\pi}\prod_{l}\left[P\left(w_l<T_{obs}|l\right)\right]^{n_{l}}\cdot\frac{(N-r+2-M)_{M}}{\left(2^{N}-1\right)\cdot\prod_{l}n_{l}!}\label{eq:  intermed. result}\\
M_{max} & =\min(r,N-r+1)\nonumber \end{flalign}
Eq. \eqref{eq:  intermed. result} is useful for  generalizations where
$P\left(X_{i}\textrm{ is a success}\left|\mathcal{H}\right.\right)=\frac{1}{2}$
but the individual $X_{i}$ are not normally distributed, since at
this point it is still left open which weight $w_l$ to use in order to quantify
the discrepancy between the model prediction and the observed outcome
of \emph{individual} success runs.

Assuming $X_{i}\sim\mathcal{N}\left(\mu_{i},\sigma_{i}^{2}\right)$,
it is most natural to use the $\chi^{2}$ of each run because it corresponds
directly to the probability density of the data. The additional benefit
of this choice is that $P\left(T<T_{obs}|l\right)$ is known exactly,
it is just the cumulative distribution function of the celebrated
$\chi^{2}$- distribution with $l$ degrees of freedom:\begin{flalign}
P\left(T<T_{obs}|l\right) & =\int_{0}^{T_{obs}}\mbox{d}\chi^{2}\,\frac{1}{2^{l/2}\Gamma\left(l/2\right)}e^{-\chi^{2}/2}\left(\chi^{2}\right)^{-1+l/2}\nonumber \\
P\left(T<T_{obs}|l\right) & =\frac{\gamma(l/2,T_{obs}/2)}{\Gamma(l/2)}.\label{eq:chi2 CDF}\end{flalign}
In other words, it is the \emph{regularized incomplete} gamma function,
comprised of the \emph{lower incomplete }gamma function\[
\gamma(a,x)=\int_{0}^{x}\mbox{d}t\, t^{a-1}e^{-t}\]
and the \emph{complete }gamma function\[
\Gamma(a)=\int_{0}^{\infty}\mbox{d}t\, t^{a-1}e^{-t}.\]
This is true even though the individual observations in a run are
not normally distributed, but according to the \emph{half-normal }distribution,
since they are required to be successes. In fact, if $X_{i}$ is a
random variable distributed according to a
standard normal distribution limited
to the domain $\left[a_{i},\, b_{i}\right],\,\, a_{i},b_{i}\in\overline{\mathbb{R}}$,
the sampling distribution of\[
X_{1}^{2}+\dots+X_{l}^{2}\]
is given by the $\chi^{2}$- distribution with $l$ degrees of freedom
\eqref{eq:chi2 CDF}, regardless of the domains $\left[a_{i},\, b_{i}\right]$.
The proof follows the traditional lines by transforming to spherical
coordinates. It is then seen that the angular contributions (depending
on $a_{i},\, b_{i}$) are removed in the normalization, and the radial
behavior (independent of $a_{i},\, b_{i}$) is the $\chi^{2}$- distribution.
See, e.g., \cite[chap. 11]{stuart_kendalls_1994}  for details.

Now the derivation of the distribution of $T$ is completed, \eqref{eq:  intermed. result}
combined with \eqref{eq:chi2 CDF} give a complete specification that
can be implemented in just a few lines of code in \texttt{mathematica
\cite{wolfram_research_mathematica_2008}.} As an example, $P(T\ge T_{obs}|N=25)$
is plotted as a function of $T_{obs}$ in Fig. \ref{fig:exact solution}.
Since the number of partitions which contribute to $P\left(T<T_{obs}\left|N\right.\right)$
grows rapidly with $N$ (see appendix for details), we have to resort
to a Monte Carlo approximation of the $p-$value for $N\gtrsim80$.
Note that the Monte Carlo output also serves as a valuable cross check
with the exact solution for small $N$. We now briefly describe the
Monte Carlo algorithm:
\begin{enumerate}
\item Fix a number of experiments, $K$, and the number of observations,
$N$, in each experiment.
\item Generate $K\cdot N$ standard normal variates. \label{step:variates}
\item In each of the $K$ experiments, find the largest $\chi_{run}^{2}$
of any success run. This is $T_{obs,\, j}$ for the experiment $j,\, j=1\dots K$.
Filter out all experiments that contain no success.
\item \label{step:p-estimate}
Let $L$ denote the number of experiments in which $T_{obs,j} \ge T_{obs}$.
Then estimate the $p$-value, $P(T \ge T_{obs}|N)$ as $p\approx \frac{L}{K}$.
\end{enumerate}

We estimate the uncertainty on $p$ as obtained in step \ref{step:p-estimate}
from a Bayesian point of view. The sampling can be seen as a Bernoulli process,
with a constant chance of $p$ in each trial $j$ that $T_{obs,j} \ge T_{obs}$.
Assuming a uniform prior on $p$, the posterior then becomes
\begin{equation}
 P\left( p | L,K \right) = \frac{(K+1)!}{L!(K-L)!}p^L \left( 1-p \right)^{K-L}
\end{equation}
with the mode at $p=L/K$. Let $\langle \cdot \rangle$ denote the expectation value
under the posterior, then the variance of $p$ is
\begin{equation}
  \frac{\langle p \rangle \left( 1- \langle p \rangle \right)}{K+3}
\end{equation}
Thus for large $K$, the variance falls off as $1/K$.

As discussed in the introduction, we can define another statistic, call it $T^f$, analogous to $T$,
but now for the \emph{failure} runs instead of the success runs.
$T^f$ also tests the model's ability to reproduce the data.
In the algorithm indicated above, the same variates
obtained in step \ref{step:variates} can be used to calculate $p$-values for $T^f$.
One simply considers the largest $\chi^2_{run}$ of any failure run and filters
out all experiments with no failure.
Due to the symmetry of the Normal distribution, we have
\begin{equation}
\label{eq:succEQfail}
 P\left(T<T_{obs}|N\right)=P\left(T^f<T_{obs}|N\right)
\end{equation}

Given the set of samples $\{ T_{obs,j}\}$, we can construct the
\emph{empirical cumulative distribution function} (ECDF) \cite[chap. 25.3]{cramer_mathematical_1999}
for graphical display.
In Fig. \ref{fig:exact solution},
we show the Monte Carlo results ($1-\mbox{ECDF}(T_{obs}), K=10000, N=25$) for success
runs (green), and failure runs (red) and finally the exact results (blue) for $N=25$ data points.

For practical use, the critical values of $T$ for three often used
confidence levels $\alpha=5\%,\,1\%,\,0.1\%$ are presented in Table
\ref{tab:Critical-values-of}. Note that for fixed $\alpha$, the
critical values vary approximately linearly with $\log N$ \[
T_{crit}\left(N|\alpha\right)\sim c\cdot\log N+b\left(\alpha\right).\]
The slope $c$ appears to be nearly independent of $\alpha$. In Fig.
\ref{fig:critical values}, the following parameter values are chosen:\begin{flalign}
\alpha=0.05 & \Rightarrow c=2.8,\,\, b=2.5\nonumber \\
\alpha=0.01 & \Rightarrow c=2.9,\,\, b=6.1\label{eq:line parameters}\\
\alpha=0.001 & \Rightarrow c=3.0,\,\, b=11.6\nonumber \end{flalign}

\begin{figure}
\includegraphics{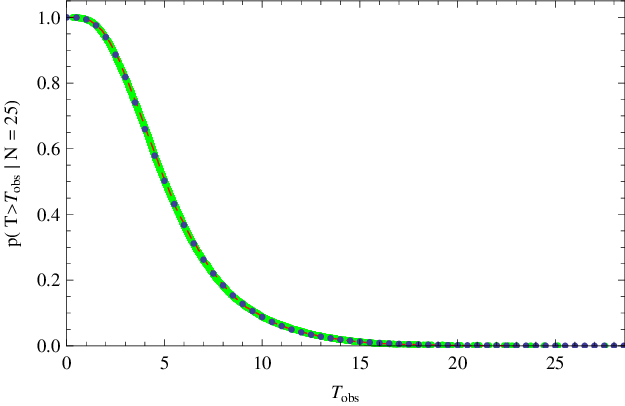}

\caption{$p-$value for the runs test statistic $T$ and $N=25$ observations.
The Monte Carlo results for successes (green) and
failures (red dashed) with $K=10000$ generated experiments
are in excellent agreement with the exact results (blue
dotted) using \eqref{eq:  intermed. result}, \eqref{eq:chi2 CDF}.
}
 \label{fig:exact solution}
\end{figure}

\begin{table}
\begin{center}

  \label{tab:Critical-values-of}

\begin{tabular}{c|ccccccc}
\hline
\textbf{$\mathbf{N}$} & 5 & 10 & 25 & 50 & 100 & 500 & 1000\tabularnewline
\hline
\textbf{$\boldsymbol{\alpha=0.05}$} & 6.8 & 8.8 & 11.5 & 13.4 & 15.3 & 19.8 & 21.6\tabularnewline
\textbf{$\boldsymbol{\alpha=0.01}$} & 10.4 & 12.8 & 15.7 & 17.7 & 19.7 & 24.4 & 25.9\tabularnewline
\textbf{$\boldsymbol{\alpha=0.001}$} & 15.5 & 18.3 & 21.6 & 23.8 & 25.6 & 29.9 & 32.0\tabularnewline
\hline
\end{tabular}

\smallskip{}

\caption{
Critical values of $T_{obs}$ at the $\alpha=5\%,\,1\%,\,0.1\%$ level
as a function of $N$.
Up to $N=50$ these are found from the exact solution. For larger
$N$, the critical values are estimated from the Monte Carlo approximation
using $K=10^{5}$ simulated experiments and linear interpolation of
$p\left(T_{obs}\right)$ based on the points $\left(T_{obs,\, j},\, p\left(T_{obs,\, j}\right)\right),\, j=1\dots K$.
}

\end{center}
\end{table}

\begin{figure}
\includegraphics{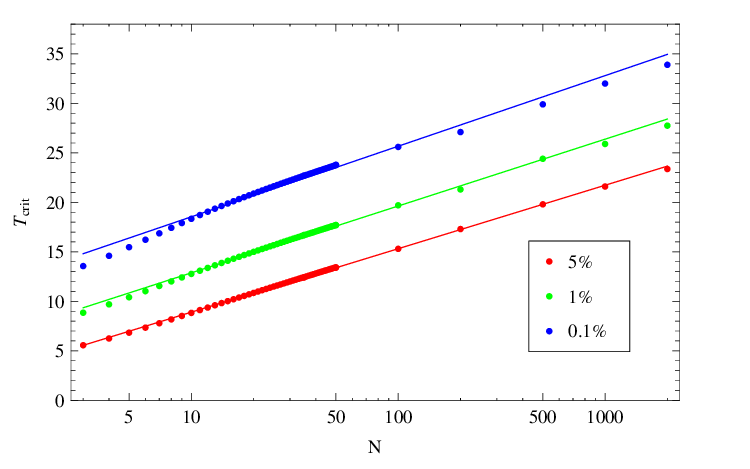}

\caption{Critical values of $T_{obs}$ at the $\alpha=5\%,\,1\%,\,0.1\%$ level.
$T_{crit}$ scales approximately linearly with $\log N$. The slope
is nearly independent of $\alpha$. \label{fig:critical values}}

\end{figure}

\section{Example\label{sec:Example}}

Let us discuss an example that frequently arises in high energy physics to
study the power of significance tests based on $T$. For comparison, we  use
the classic $\chi^2$ test statistic.

Assume an experiment is conducted to observe the quantity $y=y(x)$.
The uncertainties are modeled as arising from a
normal distribution with known variance, then for each of the $N$ independent
observations
\begin{equation}
 y_i \sim \mathcal{N}(\mu_i, \sigma_i^2).
\end{equation}

The purpose of the experiment is to decide whether the currently accepted
hypothesis $\mathcal{H}$ is sufficient to explain the data. The predictions
derived from $\mathcal{H}$ are given as
\begin{equation}
 \mu_i = f(x_i).
\end{equation}
In addition, assume there exists an extension to $\mathcal{H}$,
denoted by $\mathcal{H}_1$, whose
predictions are
\begin{equation}
 \mu_i = f(x_i)+g(x_i).
\end{equation}
Typically the extra contribution $g(x)$ is significant
in a narrow region only. For concreteness, we assume
it is a localized peak of the Cauchy-Lorentz form
with location parameter $\beta$ and scale
parameter $\gamma$
\begin{equation}
 g(x) = A \cdot \left( 1 + \frac{(x-\beta)^2}{\gamma^2} \right)^{-1}.
\end{equation}
The magnitude of the extra contribution is defined by $A$.
Three cases are to be distinguished.
For $A \to 0, \mathcal{H}_1 =\mathcal{H}$. For fixed
confidence level $\alpha$,
tests based on $T$ and $\chi^2$ reject $\mathcal{H}$ with
the nominal probability $\alpha$.

For $A \to \infty$, $\mathcal{H}$ is rejected with
probability 1 for either statistic.
In the most interesting region, $A$ not too small and
not too large, we study the rejection power of $T$ and $\chi^2$
by simulating experiments under $\mathcal{H}_1$. We then analyze the
data under $\mathcal{H}$ and estimate the power
as the fraction of times
the $p$-value is found in the rejection region defined by the
confidence level $\alpha= 0.05$.
We simulate an ensemble of 10000 experiments with $N=10$ draws from
$\mathcal{H}_1$
with $x_i=i, i=1 \dots 10$ and
parameters $\beta = 5.5, \gamma = 2$ fixed for different
values of $A$.
Without loss of generality, we choose
$f(x)=0, \sigma_i = 1$.
The numerical  results  are shown in Figure \ref{fig:Power}
as a function of $A$. The power of $T$ equals the power of
$\chi^2$ for $A=0$ and $ A \gg 1$ as expected. In the intermediate
region, the power of $T$ significantly exceeds that of $\chi^2$.
Similar results are obtained when keeping $A$ fixed and varying
$\gamma$ instead.

Moreover, if we choose a distribution with light tails (e.g.\ a normal
distribution) for $g(x)$ instead of the heavy-tailed Cauchy distribution,
the qualitative results are unaffected. The power of $T$ is  larger
than the power of $\chi^2$ for the alternative $\mathcal{H}_1$.
For  medium sized $g(x)$, the difference can reach up to 40\%.

\begin{figure}
\includegraphics{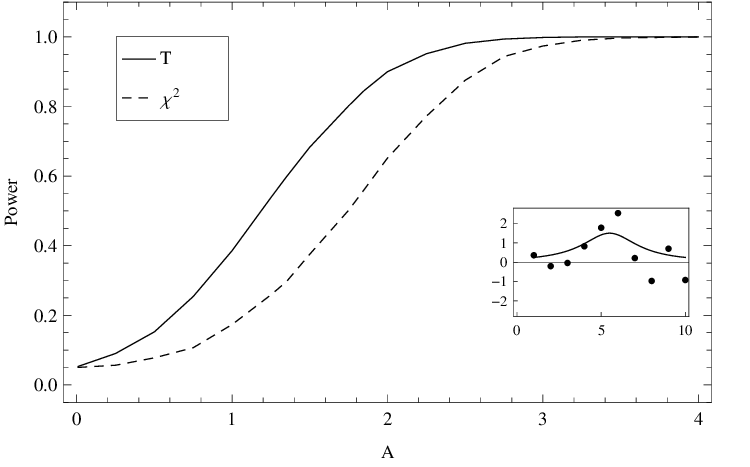}

\caption{
Power of statistics $T$ and $\chi^2$ in rejecting the null hypothesis of
normality around zero mean at the 5\% confidence level. For fixed $A$,
10000 experiments, each of sample
size 10, have been generated from a normal distribution with variance one. The mean
of sample $i, i=1 \dots 10$
is distributed according to a Cauchy distribution
$ A \cdot \left( 1 + \frac{(i-5.5)^2}{3^2} \right)^{-1}$. A sample data set ($A=1.5$)
is shown in the inset.
The curves show the
power as a function of the amplitude $A$.
}
\label{fig:Power}
\end{figure}

\section{Discussion\label{sec:Conclusion}}

We have introduced the test statistic $T$ and calculated its distribution
for the case of a sequence of independent observations, each following
a normal distribution with known mean and variance.   Implementing
the algorithm to calculate critical values of $T$ for the various
confidence levels is straightforward, but the execution time grows
rapidly with the number of observations $N$. Hence a Monte Carlo
scheme is recommended to calculate critical values for $N\gtrsim80$,
yielding results in reasonable time even for $N\gtrsim1000$, thus
covering virtually the whole range of interest relevant to everyday
problems. We have verified that the Monte Carlo results agree well
with exact results for small $N$.

We have demonstrated the usefulness of $T$ and recommend its usage for
hypothesis testing especially against alternatives with
additional local peaks.

The more common problem in data analysis is to consider a composite
hypothesis: in a first step free parameters of the model are estimated
from the data ({}``fit'') and in the second step predictions, based
on the fitted parameters, and observations are compared ({}``goodness
of fit''). With most test statistics the effect of fitted parameters
on the sampling distribution of the statistic is not analytically
known. The only notable exception to this rule is the $\chi^{2}$
statistic: for $k$ parameters extracted from maximizing the likelihood
of $N$ normal observations, the number of degrees of freedom is $N-k$,
instead of $N$ in case all parameters are known a priori. Unfortunately,
this cannot be extended to the runs statistic $T$ considered here.
However what we can do is to simulate data sets using a Monte Carlo
approach, and study the approximate numerical distribution of $T$.
For the simplest case of a straight line and a maximum likelihood
fit to 10 data points, the results are shown in Fig. \ref{fig:Distribution-of-fit}.
It is evident that $p\left(T\right)$ drops to zero much more sharply
for the fitted data (green=successes, red=failures) than for the exact
results with no parameters fitted (blue). Accordingly, the critical
values for fitted $T$ at level $\alpha=5\%,\,1\%,\,0.1\%$ are $T_{crit}=6.0,8.5,12.4$.
In general, the qualitative effect of fitting parameters but pretending
that they were known before the data was taken is that the $p$-value
is not distributed uniformly. Instead, its distribution is biased
towards $p=1$, leading to conservative decisions. The quantitative
effect depends on  the number of observations and parameters, the
maximization condition determining the best fit parameters (likelihood,
posterior ...) and possibly other effects.

Through Monte Carlo approximations the use of the runs statistic $T$
can be further generalized to the important class of problems involving
asymmetric uncertainties like Binomial or Poisson distributions. All
that needs to be changed is the weight of individual runs. As a starting
point one could define $T$ as the smallest  probability (density)
of any run, $T=\min_{j}P\left(A_{j}\left|\mathcal{H}\right.\right)$.
Numerically the distribution of $T$ is then found in analogous fashion
to the algorithm described in the caption of Fig. \ref{fig:Distribution-of-fit}.
An implementation of this algorithm is scheduled to be included in
a future release of \emph{BAT}, the \emph{Bayesian Analysis Toolkit}
\cite{Caldwell20092197}. BAT is a C++ library based on the \emph{Markov
Chain Monte Carlo} approach which offers routines for fitting, limit
setting, goodness of fit and more. Using the Metropolis algorithm
\cite{metropolis__1953} it is possible to simulate the data sets
needed for approximate $p$-value calculations.

\begin{figure}
\includegraphics{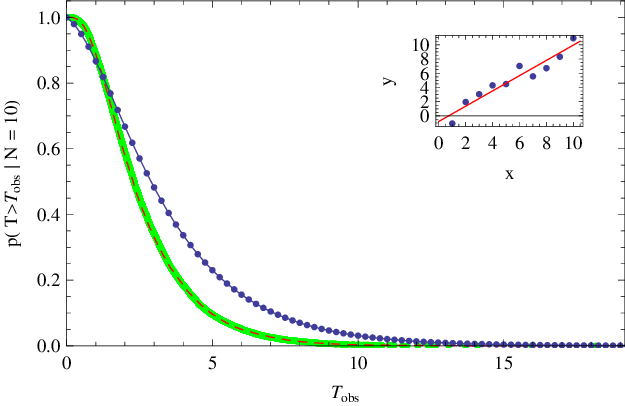}

\caption{
Distribution of runs test statistic $T$ with and without fitted parameters.
The Monte Carlo results for successes (green) and
failures (red dashed) are obtained from $K=10000$
generated experiments. Each data set consists of $N=10$ data points
$\left(x_{i},y_{i}\right)$, where the $y_{i}$ are normally distributed
around a straight line of unit slope and zero intercept, $y_{i}\sim\mathcal{N}\left(\mu=1\cdot x_{i}+0,\sigma^{2}=1\right)$.
Then a maximum likelihood fit is performed to extract the two parameters
of a straight line model $y=m\cdot x+b$ (see inset). Finally the
predictions are calculated from the fitted model, and $T_{obs}$ is
determined for each experiment. With the set of 10000 values of $T_{obs}$
the empirical CDF (ECDF) is computed, and $1-\mbox{ECDF}\left(T_{obs}\right)$
is plotted. For comparison the exact results (blue dotted) for $N=10$ using Eq. \eqref{eq:  intermed. result},
\eqref{eq:chi2 CDF} are shown. The effect of fitting is that $p\left(T\right)$
drops more sharply, hence the critical values are pushed towards smaller
$T$; e.g. at the 5\% level $T_{crit}=6.0$ (fit) vs $T_{crit}=8.8$
(no fit).
}
\label{fig:Distribution-of-fit}
\end{figure}

 \appendix
\renewcommand{\theequation}{A.\arabic{equation}}

\section*{Appendix}

\subsection*{Integer Partitions and Computational Complexity\label{sub:Integer-Partitions-and}}

We are now interested in the number of sequences, $\nu\left(N\right)$,
which need to be taken into account to calculate a $p$-value for
$T$, $P(T\ge T_{obs}|N)$, using \eqref{eq:  intermed. result}.
Put differently, $\nu\left(N\right)$ is the number of terms in
the multiple sum \begin{equation}
P(T\ge T_{obs}|N)=1-\sum_{r=1}^{N}\sum_{M=1}^{\min(r,N-r+1)}\sum_{\pi}\dots,\label{eq:schematic p}\end{equation}
where $\sum_{\pi}$ extends over all inequivalent sequences with $r$
successes distributed in $M$ success runs, see \eqref{def:equival relation}
and \eqref{eq: sum over partitions}. Since $\nu\left(N\right)$
determines  the number of steps needed to calculate the $p$-value
on a computer, knowing the form of the $N$-dependence aids in ascertaining
whether the computer can be expected to finish the calculation in
reasonable time. In the main result of this section, Proposition \ref{proposition:Let--denote},
$\nu\left(N\right)$ is essentially given by the number of integer
partitions. To begin with, we introduce the integer partitions and
illustrate with an example. The book \cite{andrews_theory_1998} by Andrews
is a good reference devoted entirely to partitions.
\begin{defn}
Let $\part N$ denote the number of partitions of the integer $N$
into a sum of one or more positive integers. For consistency it is
useful to define $\part{0}\equiv1$. Let $\part{N,\, k}$ denote the
number of partitions of $N$ into exactly $k$ addends and finally
let $\partLE{N,\, i}$ denote the number of partitions of $N$ into integers
of at most size $i$, with $\partLE{0,\, i}\equiv1$.
\label{def:part N}
\end{defn}
\begin{example}
The integer 5 can be written in $\part5=7$ different ways:

\begin{eqnarray}
5 & = & 5\\
 & = & 4+1\\
 & = & 3+2\\
 & = & 3+1+1\label{eq:3a}\\
 & = & 2+2+1\label{eq:3b}\\
 & = & 2+1+1+1\\
 & = & 1+1+1+1+1
\label{exa:The-integer-5}
\end{eqnarray}
One can see that 5 can be decomposed as a sum of exactly three non-zero
integers in two ways (Eq. \eqref{eq:3a} and \eqref{eq:3b}), thus $\part{5,3}=2.$ Furthermore,
the number of ways to partition 5 into addends less than 3 is $\partLE{5,\,2}=3$
(Eq. \eqref{eq:3b}-\eqref{exa:The-integer-5}).
\end{example}
The three partition numbers just defined are obviously closely connected,
we shall need the following relations; elementary proofs based on
Ferrer's diagrams can be found in the books by Andrews \cite[chap. 1]{andrews_theory_1998}
and Knuth \cite[chap. 7.2.1.4]{knuth_art_2005}.
\begin{fact}
Assuming $N\ge1$, Def. \ref{def:part N} yields:\[
\part N=\sum_{r=1}^{N}\part{N,r}\]
\begin{equation}
\partLE{N,r}=\sum_{M=1}^{r}\part{N,M}\label{fac:Using-definition-}\end{equation}
\[
\partLE{M,r-M}=\part{r,r-M}\]

\begin{equation}
\part{N}=\sum_{r=0}^{N-1}\partLE{r,N-r}\label{lem:Furthermore--and}\end{equation}
\end{fact}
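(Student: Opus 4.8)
The plan is to establish all four identities with two elementary combinatorial devices: \emph{conjugation} of Ferrers diagrams (reflection across the main diagonal, which interchanges the number of parts with the size of the largest part) and the \emph{shift bijection} that subtracts $1$ from every part. The first identity, $\part N=\sum_{r=1}^N\part{N,r}$, needs no real work: every partition of $N$ has a well-defined number of parts lying between $1$ and $N$, so the sum merely sorts the partitions of $N$ by their part count, each counted once.

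For the second identity I would reason as follows. By definition $\partLE{N,r}$ counts partitions of $N$ whose largest part is at most $r$. Conjugation sends the largest part to the number of rows, so it gives a bijection between partitions of $N$ with largest part $\le r$ and partitions of $N$ with at most $r$ parts. Classifying the latter by their exact number of parts $M$, which ranges from $1$ to $r$, yields $\partLE{N,r}=\sum_{M=1}^{r}\part{N,M}$.

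The third and fourth identities both rest on one auxiliary statement, $\part{N,M}=\partLE{N-M,M}$. To prove it I would take a partition of $N$ into exactly $M$ parts and subtract $1$ from each part; since each part was at least $1$, the result is a partition of $N-M$ into at most $M$ parts (some may vanish), and this shift is manifestly reversible. Conjugating then turns ``at most $M$ parts'' into ``largest part at most $M$'', which is exactly $\partLE{N-M,M}$. Specializing $N\mapsto r$ and $M\mapsto r-M$ gives the third identity $\partLE{M,r-M}=\part{r,r-M}$ directly. For the fourth, I would combine the auxiliary identity with the first to get $\part N=\sum_{M=1}^N\part{N,M}=\sum_{M=1}^N\partLE{N-M,M}$, and then reindex via $r=N-M$, so that $M$ running from $1$ to $N$ becomes $r$ running from $N-1$ down to $0$, recovering $\part N=\sum_{r=0}^{N-1}\partLE{r,N-r}$.

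I expect the only delicate points to be bookkeeping rather than genuine obstacles. One must verify that conjugation is a true involution on Ferrers diagrams, so that it is a bijection in both directions; one must respect the convention $\partLE{0,\,i}\equiv1$ and account for the empty partition that appears at the $r=0$ (equivalently $M=N$) endpoint of the fourth sum; and one must confirm that ``at most $M$ parts'' is the right intermediate object once zeros are produced by the shift. Once these edge cases are pinned down, each of the four relations reduces to a transparent bijection or a finite reindexing.
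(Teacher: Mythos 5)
Your proofs are correct, and they use exactly the elementary Ferrers-diagram arguments (conjugation exchanging largest part with number of parts, plus the subtract-one shift) that the paper itself defers to by citing Andrews and Knuth rather than writing them out. The edge cases you flag (the $r=0$ term via $\partLE{0,i}\equiv 1$ and the vanishing of parts after the shift) are handled consistently with the paper's conventions, so nothing further is needed.
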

\begin{prop}
\label{proposition:Let--denote}Let $\nu\left(N\right)$ denote
the number of inequivalent Bernoulli sequences of length $N$, where
the probability of a success is $\frac{1}{2}$ in each trial and the
equivalence relation is defined in \eqref{def:equival relation}.
Then \begin{eqnarray}
\nu\left(N\right) & \equiv & \sum_{r=1}^{N}\sum_{M=1}^{\min(r,\, N-r+1)}\part{r,M}\\
 & = & \part{N+1}-1\end{eqnarray}
\end{prop}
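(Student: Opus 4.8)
The plan is to establish the two asserted equalities in turn, treating the second as the real content. For the first equality I would simply collect the counting already set up in the main text. The equivalence classes $\left\{\pi\right\}$ having a fixed number of successes $r$ and a fixed number of runs $M$ are in one-to-one correspondence with the integer partitions of $r$ into exactly $M$ summands, so their number is $\part{r,M}$. Summing over every admissible pair $(r,M)$, that is $1\le r\le N$ together with the range $1\le M\le\min(r,N-r+1)$ obtained for $M_{max}$, counts each inequivalent sequence (see \eqref{def:equival relation} and \eqref{eq: sum over partitions}) exactly once, which yields $\nu(N)=\sum_{r=1}^{N}\sum_{M=1}^{\min(r,N-r+1)}\part{r,M}$. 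I expect no difficulty here, as it is pure bookkeeping.

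For the second equality the idea is to recognise the double sum as an instance of relation \eqref{lem:Furthermore--and} evaluated one step higher. Applying that fact with $N\mapsto N+1$ gives $\part{N+1}=\sum_{r=0}^{N}\partLE{r,\,N+1-r}$. I would then rewrite each summand using the conjugation identity \eqref{fac:Using-definition-}, namely $\partLE{r,\,N+1-r}=\sum_{M=1}^{N+1-r}\part{r,M}$. Since $\part{r,M}=0$ whenever $M>r$, the upper index may be replaced by $\min(r,N+1-r)=\min(r,N-r+1)$ without changing the value, so that the contribution of each $r\ge 1$ is exactly the inner sum appearing in $\nu(N)$. Isolating the $r=0$ term and invoking the convention $\partLE{0,\,N+1}=1$ (equivalently, the all-failure sequence carrying no success is excluded from $\nu$) accounts for the additive constant, giving $\part{N+1}=1+\sum_{r=1}^{N}\sum_{M=1}^{\min(r,N-r+1)}\part{r,M}=1+\nu(N)$, which rearranges to the claim.

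I expect the main obstacle to be less a computation than the correct choice of identity and careful index bookkeeping. In particular one must verify that truncating the inner sum at $\min(r,N-r+1)$ is harmless, which hinges on $\part{r,M}=0$ for $M>r$, and that the $r=0$ boundary term of \eqref{lem:Furthermore--and} contributes precisely the subtracted $1$. A small sanity check, say $N=3$ where direct enumeration of the run structures gives $\nu(3)=4=\part{4}-1$, confirms both the range constraint and the value of the constant.
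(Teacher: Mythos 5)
Your proposal is correct and follows essentially the same route as the paper's proof: both apply \eqref{lem:Furthermore--and} shifted to $N+1$, convert each $\partLE{r,\,N+1-r}$ via \eqref{fac:Using-definition-}, truncate the inner sum using $\part{r,M}=0$ for $M>r$, and absorb the $r=0$ boundary term into the subtracted $1$. The only cosmetic difference is that you work forward from $\nu(N)$ while the paper unwinds from $\part{N+1}-1$; the sanity check at $N=3$ is a nice addition but not a substantive departure.
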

\begin{proof}
We start from the right hand side of the proposition using \eqref{lem:Furthermore--and}:
\begin{flalign}
\part{N+1}-1 & =-1+\sum_{r=0}^{N}\partLE{r,\, N+1-r}\\
 & =\sum_{r=1}^{N}\partLE{r,\, N+1-r}.\end{flalign}
Now using \eqref{fac:Using-definition-}:\[
\part{N+1}-1=\sum_{r=1}^{N}\sum_{M=1}^{N-r+1}\part{r,\, M}.\]
But we know that we cannot partition $r$ successes into more than
$r$ success runs, so $\part{r,\, M>r}=0$, hence

\begin{eqnarray}
\part{N+1}-1 & = & \sum_{r=1}^{N}\sum_{M=1}^{\min\left(r,\, N-r+1\right)}\part{r,\, M}\\
 & = & \nu\left(N\right).\end{eqnarray}

\end{proof}
Since $\part{r,M}$ represents the number of elements in $\sum_{\pi}$
of \eqref{eq:schematic p}, $\nu\left(N\right)$ is the exact number
of sequences that contribute to $P(T\ge T_{obs}|N)$. We can \emph{approximate}
$\nu\left(N\right)$ by employing the asymptotic expression of
$\part{N}$ for  large  $N$ first derived by
Hardy and Ramanujan \cite{hardy_asymptotic_1918}:

\[
\part{N}\sim\frac{\exp\left(\pi\sqrt{2/3\cdot N}\right)}{4\sqrt{3}N}.\]
Hence, for large $N$, $\nu\left(N\right)$ grows nearly exponentially.
\begin{cor}
For large $N$, $\nu\left(N\right)$ is approximately given by
\begin{equation}
\nu\left(N\right)\sim\frac{\exp\left(\pi\sqrt{2/3\cdot\left(N+1\right)}\right)}{4\sqrt{3}\left(N+1\right)}\label{eq:asym partition number}\end{equation}

\end{cor}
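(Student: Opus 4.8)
The plan is to reduce the asymptotic claim to two ingredients that are already available: the exact identity $\nu\left(N\right)=\part{N+1}-1$ established in Proposition \ref{proposition:Let--denote}, and the Hardy--Ramanujan asymptotic for the partition function quoted immediately before the corollary. Since the corollary is obtained by specializing the latter at argument $N+1$ and then discarding the additive constant, essentially all of the work consists in justifying that the $-1$ is asymptotically irrelevant.

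First I would rewrite the quantity of interest using the Proposition, so that
\[
\nu\left(N\right)=\part{N+1}-1.
\]
Next I would substitute $N+1$ for $N$ in the Hardy--Ramanujan formula; this is legitimate because the asymptotic holds as its argument tends to infinity and $N+1\to\infty$ together with $N$, yielding
\[
\part{N+1}\sim\frac{\exp\left(\pi\sqrt{2/3\cdot\left(N+1\right)}\right)}{4\sqrt{3}\left(N+1\right)}.
\]
The heart of the argument is then to show that subtracting $1$ does not disturb this equivalence. Writing $f\left(N\right)$ for the right-hand side above, I would factor the ratio $\nu\left(N\right)/f\left(N\right)$ as $\bigl(\part{N+1}-1\bigr)/\part{N+1}$ times $\part{N+1}/f\left(N\right)$. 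The second factor tends to $1$ directly by Hardy--Ramanujan, while the first factor equals $1-1/\part{N+1}$ and tends to $1$ because $\part{N+1}$ diverges (in fact super-polynomially). Hence the product tends to $1$, which is exactly the assertion $\nu\left(N\right)\sim f\left(N\right)$.

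I expect no substantive obstacle here: the single point requiring attention is the elementary observation that a bounded additive term is negligible against a quantity growing like $\exp\bigl(\pi\sqrt{2N/3}\bigr)$, so the stated leading term is unaffected by the $-1$. The corollary therefore follows at once by combining Proposition \ref{proposition:Let--denote} with the Hardy--Ramanujan estimate.
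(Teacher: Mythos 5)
Your proposal is correct and follows essentially the same route as the paper: apply the identity $\nu\left(N\right)=\part{N+1}-1$ from Proposition \ref{proposition:Let--denote} and specialize the Hardy--Ramanujan asymptotic at $N+1$. The only difference is that you spell out why the subtracted $1$ is asymptotically negligible, a step the paper leaves implicit, and your justification of it is sound.
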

This implies that for large $N$ (say $N=1000$), in equations \eqref{eq: sum over r,M},
\eqref{eq: sum over partitions} the sum is over more partitions ($\nu\left(N\right)=2.5\times10^{31}\approx2^{104}$)
than a current 64-bit desktop computer could even address in memory.
 In practice the exact evaluation of $P(T\ge T_{obs}|N)$ becomes
too slow already for $N\gtrsim80$ where $\nu\left(80\right)=1.8\times10^{7}$.
In contrast a Monte Carlo solution based on sampling a large number
of batches, $K$, each with $N$ pseudo random numbers is much faster:
its computational complexity is $\mathcal{O}\left(K\cdot N\right)$.





  \bibliographystyle{elsarticle-num}
\bibliography{runs}







\end{document}